\documentclass[12pt]{amsart}
\usepackage{url}

\usepackage{amsmath} 
\usepackage{amsfonts}
\usepackage{amssymb}
\usepackage{amsthm}

\usepackage{url}

\usepackage{latexsym}
\usepackage{color}
\usepackage{enumerate}
\usepackage{mathrsfs}
\usepackage{comment}
\usepackage{setspace}
\usepackage{ulem}

\newtheorem{theorem}{Theorem}[section]

\theoremstyle{definition}
\newtheorem{definition}[theorem]{Definition}

\newtheorem{conjecture}{Conjecture}[section]
\theoremstyle{remark}
\newtheorem{prop}{Proposition}[section]
\newtheorem{remark}[theorem]{Remark}

\numberwithin{equation}{section}
\def\R{\mathbb{R}}
\def\({\left(}
\def\){\right)}
\newcommand{\be}{\begin{equation}}
\newcommand{\ee}{\end{equation}}
\def\Ric{\textmd{Ric}}
\def\m{\mathfrak{m}}
\def\p{\partial}

\begin{document}
	
	\title[Minkowski-like inequality for static manifolds]{On a Minkowski-like inequality for asymptotically flat static manifolds}
	
	\author[McCormick]{Stephen McCormick}
	
	\address{Institutionen f\"{o}r Matematik, Kungliga Tekniska H\"{o}gskolan, 100 44 Stockholm, Sweden.}
	\curraddr{Matematiska institutionen, Uppsala universitet, 751 06 Uppsala, Sweden.}
	\email{stephen.mccormick@math.uu.se}

	\subjclass[2010]{Primary 53C20; Secondary 83C99, 53C44}
	
	\date{}
	
	\dedicatory{}
	
	\commby{}
	
	\maketitle 
	
	\begin{abstract}
		The Minkowski inequality is a classical inequality in differential geometry, giving a bound from below, on the total mean curvature of a convex surface in Euclidean space, in terms of its area. Recently there has been interest in proving versions of this inequality for manifolds other than $\R^n$; for example, such an inequality holds for surfaces in spatial Schwarzschild and AdS-Schwarzschild manifolds. In this note, we adapt a recent analysis of Y. Wei to prove a Minkowski-like inequality for general static asymptotically flat manifolds.
	\end{abstract}
	
	\section{Introduction}

The Minkowski inequality is a celebrated result in classical differential geometry, bounding the total mean curvature of a closed convex hypersurface $\Sigma$ in $\R^n$ from below in terms of its area \cite{Minkowski}. Precisely, we have
\begin{equation} \label{eq-mink}
\frac{1}{(n-1)\omega_{n-1}}\int_\Sigma H\,d\sigma\geq\( \frac{|\Sigma|}{\omega_{n-1}} \)^{\frac{n-2}{n-1}},
\end{equation}
where $|\Sigma|$ is the area of $\Sigma$ and $\omega_{n-1}$ is the area of the unit $(n-1)$-sphere. Moreover, equality holds if and only if $\Sigma$ is a round sphere. The hypotheses of \eqref{eq-mink} have since been improved to include mean convex star-shaped \cite{GMTZ,G-L09} and outer-minimizing \cite{H09} surfaces.

Recall, a closed surface $\Sigma$ is said to be outer-minimizing if it minimises area among all surfaces enclosing $\Sigma$. Note that the inequality (\ref{eq-mink}) has been transposed from how it is perhaps more commonly expressed, for a more direct comparison to (\ref{eq-main}) below.\\

In recent years, there has been interest in generalisations of the Minkowski inequality to surfaces embedded in manifolds other than Euclidean space. For example, Minkowski inequalities are known for surfaces in hyperbolic space \cite{LimaG,BHW}, Schwarzschild manifolds \cite{BHW,Wei2017}, and Schwarzschild-AdS manifolds \cite{BHW}. In this note, we prove a Minkowski inequality for general static asymptotically flat manifolds, generalising the classical inequality and the known inequality for Schwarzschild manifolds. As with several other proofs of Minkowski inequalities, our proof relies on monotonicity of a quantity under inverse mean curvature flow. The key contribution here is the observation that using the weak formulation of inverse mean curvature flow allows one to prove the inequality on a general static manifold, rather than working only within a fixed manifold where the smooth flow is known to be well-behaved. The proof adapts an analysis of Wei \cite{Wei2017} that is used to prove a Minkowski inequality for outer-minimizing surfaces in the Schwarzschild manifold.

First, we recall some definitions.

\begin{definition}
	A Riemannian manifold $(M,g)$ is said to be asymptotically flat (with one end) if $M$ minus a compact set is diffeomorphic to $\R^n$ minus a closed ball, the scalar curvature is integrable, ie. $R(g)\in L^1(M)$, and near infinity, $g$ satisfies:
	\begin{equation}
	g=\delta+O(|x|^{-\tau}),\qquad \partial g=O(|x|^{-\tau-1}),\qquad \partial^2 g=O(|x|^{-\tau-2})
	\end{equation}
	where $\delta$ is the flat metric and $\tau\in(1/2,1]$.
\end{definition}
It is well-known that this decay is sufficient to ensure that the ADM mass is well-defined.
\begin{definition}[\cite{ADM}]
	Let $(M,g)$ be an asymptotically flat manifold of dimension $n$ and one end. The ADM mass of $(M,g)$ is then given by
	\be 
	m=\frac{1}{2(n-1)\omega_{n-1}}\lim_{r\to\infty}\int_{S_r}\partial_i g_{ij}-\partial_j g_{ii}\,dS^j,
	\ee 
	where the limit is taken over spheres $S_r$ of radius $r$ in the asymptotic end, the coordinates near infinity are those coming from the usual Cartesian coordinates in Euclidean space, and repeated indices are summed over.\\
	
	Note that we use large spheres for the sake of convenience, but it is now well-known that the definition is independent of the limiting surfaces used. Furthermore, Cartesian coordinates are used to simplify the expression; however, the ADM mass is indeed a geometric quantity, independent of coordinates \cite{Bartnik-86,Chrusciel}.
\end{definition}

\begin{definition}\label{def-static}
	Let $(M,g)$ be an $n$-dimensional Riemannian manifold. A function $f$ on $M$ is called a static potential if it solves
	\be \label{eq-static}
	\Delta_g (f)g-\nabla^2_g f+f\Ric_g=0.
	\ee 
	If $(M,g)$ admits a positive solution to (\ref{eq-static}) then we say it is a static manifold.
\end{definition}

Note that if $(M,g)$ is asymptotically flat, then a well-known result of Corvino \cite{Corvino} implies that $g$ is scalar-flat, $R=0$. This, in turn, implies that a static potential satisfies $\Delta_g f=0$.

Throughout, we will always work with bounded static potentials, which at least in dimension 3, is implied by the assumption of positivity (see, for example, Proposition 3.1 of \cite{M-T}).

The terminology `static' comes from general relativity, and indeed so does the motivation for studying such manifolds. A manifold $(M,g)$ is static, with static potential $f$, if and only if the Lorentzian warped product metric
\be \label{eq-warpedproduct}
h=-f^2dt^2+g
\ee 
satisfies the vacuum Einstein equations. The metric (\ref{eq-warpedproduct}) is then a static spacetime in the sense of general relativity.\\

The main result of this note is the following inequality.

\begin{theorem}\label{thm-main}
	Let $(M,g)$ be an asymptotically flat manifold of dimension $3\leq n\leq 7$ with bounded positive static potential $f$. Assume $\partial M$ is not empty, and let $\Sigma$ be a connected component of the boundary that is outer-minimizing with (inward) mean curvature $H$. Assume further that any remaining components of the boundary are closed minimal surfaces.
	
	Then, after rescaling so that $f$ is asymptotic to $1$, we have
	\be \label{eq-main}
	\frac{1}{(n-1)\omega_{n-1}}\int_\Sigma fH dS\geq \( \frac{|\Sigma|}{\omega_{n-1}} \)^{\frac{n-2}{n-1}} -2m.
	\ee 
	Furthermore, we have equality if and only if $M$ has no boundary other than $\Sigma$ and the foliation of $M$ given by IMCF starting at $\Sigma$ is totally umbilic.
\end{theorem}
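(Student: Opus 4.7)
The plan is to adapt Wei's monotonicity argument \cite{Wei2017} by running the weak inverse mean curvature flow (IMCF) of Huisken--Ilmanen \cite{H09} starting from $\Sigma$. In the dimension range $3\leq n\leq 7$, and with $\Sigma$ outer-minimizing, this produces leaves $\{\Sigma_t\}_{t\geq 0}$ satisfying $|\Sigma_t|=e^t|\Sigma|$ that converge to large coordinate spheres in the asymptotic end. The main task is to show that the quantity
\[
P(t) \;:=\; e^{-\frac{n-2}{n-1}t}\left(\int_{\Sigma_t} f H\, dS + 2(n-1)\omega_{n-1}m\right)
\]
is non-increasing in $t$, with $\lim_{t\to\infty} P(t) = (n-1)\omega_{n-1}\bigl(|\Sigma|/\omega_{n-1}\bigr)^{(n-2)/(n-1)}$. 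The inequality \eqref{eq-main} is then immediate from $P(0)\geq P(\infty)$ after dividing by $(n-1)\omega_{n-1}$.

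In the smooth regime, monotonicity would follow from differentiating $\int_{\Sigma_t} fH\, dS$ using the standard IMCF evolution equations for $dS$, $f$, and $H$, then simplifying with the harmonicity $\Delta_g f=0$ (from \eqref{eq-static} together with Corvino's scalar-flatness result), the Gauss decomposition $\Delta_g f = \Delta_{\Sigma_t} f + \nabla^2 f(\nu,\nu) + H\nabla_\nu f$, and the static identity $\nabla^2 f = f\Ric_g$. After the Ricci terms cancel and $|A|^2 \geq H^2/(n-1)$ is applied one arrives at
\[
\frac{d}{dt}\int_{\Sigma_t} fH\, dS \;\leq\; 2\int_{\Sigma_t}\nabla_\nu f\, dS + \frac{n-2}{n-1}\int_{\Sigma_t} fH\, dS.
\]
The flux $\int_{\Sigma_t}\nabla_\nu f\, dS$ is $t$-independent by the divergence theorem applied to the harmonic function $f$ in the region between any two consecutive leaves (no other boundary components intervene, since they lie on the non-asymptotic side of $\Sigma$), and equals $(n-2)\omega_{n-1}m$ by the mass-identifying expansion $f = 1 - m\,|x|^{-(n-2)} + o(|x|^{-(n-2)})$ that characterizes the ADM mass on asymptotically flat static manifolds with $f\to 1$. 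Substituting yields exactly $P'(t)\leq 0$.

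To promote this to the weak flow, $fH\, dS$ becomes $f|\nabla u|\, dS$ along level sets of the IMCF potential $u$; the smooth computation applies on regular parts, and at jumps, where the current leaf is replaced by the boundary of its strictly outward minimizing hull, the newly exposed pieces are minimal and, combined with the outer-minimizing property and $f>0$, the jump preserves or decreases $P$ --- in close analogy with the Hawking-mass jump analysis. The asymptotic value $P(\infty)$ is obtained by noting that $\Sigma_t$ is eventually $C^1$-close to a coordinate sphere of radius $r_t$ with $\omega_{n-1}r_t^{n-1}\approx|\Sigma_t|$ and $fH \sim (n-1)/r_t$ there, so $e^{-(n-2)t/(n-1)}\int fH\, dS$ tends to the stated limit while the $m$-correction decays. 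For the rigidity statement, constancy $P'\equiv 0$ saturates $|A|^2 \geq H^2/(n-1)$ at every leaf, giving a totally umbilic foliation, and also rules out any jump, hence any additional minimal boundary components that could trigger such jumps.

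The main technical obstacle I anticipate is verifying the jump inequality $P(t^+)\leq P(t^-)$ in this generality. The Hawking-mass jump argument exploits scalar curvature and the minimality of the new portions, whereas here the analogous cancellations have to be carried out using only the static equation, the positivity of $f$, and the outer-minimizing property, so the jump inequality for the integrand $fH$ needs to be re-derived rather than imported directly from \cite{H09} or \cite{Wei2017}. Once that point is settled, the dimension restriction $n\leq 7$ enters only through the regularity of area-minimizing boundaries required in constructing the outward-minimizing hull.
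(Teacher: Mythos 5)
Your overall strategy coincides with the paper's: your $P(t)$ is, up to the constant factor $|\Sigma|^{\frac{n-2}{n-1}}$, exactly the quantity $Q(t)=|\Sigma_t|^{-\frac{n-2}{n-1}}\bigl(2(n-1)\omega_{n-1}m+\int_{\Sigma_t}fH\,d\mu_t\bigr)$ used in the paper; the smooth monotonicity computation (static equation, harmonicity of $f$, $(n-1)|A|^2\geq H^2$) and the limit at infinity are the same. The genuine gap is in your treatment of the remaining boundary components, which is precisely where the general static case differs from the Schwarzschild case. You claim the flux $\int_{\Sigma_t}\nabla f\cdot\nu\,dS$ is $t$-independent and equal to $(n-2)\omega_{n-1}m$ because the other components ``lie on the non-asymptotic side of $\Sigma$.'' But $\Sigma$ is itself a component of $\partial M$, so there is no non-asymptotic side of $\Sigma$ inside $M$: the remaining minimal components necessarily sit in the region the flow sweeps out, and the divergence theorem applied between $\Sigma_t$ and a large sphere picks up boundary terms over $\widehat{\partial M}=\partial M\setminus\Sigma$. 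The paper controls these by combining Theorem \ref{thm-fzero} (Huang--Martin--Miao: $f\equiv 0$ on such minimal components) with the Hopf lemma, which gives $\nabla f\cdot\nu>0$ there and hence $\int_{\Sigma_t}\nabla f\cdot\nu\,dS\leq(n-2)\omega_{n-1}m$, with equality if and only if $\widehat{\partial M}$ is empty. Your inequality survives because this defect has the favourable sign, but your rigidity argument does not: you deduce ``no extra boundary'' by ruling out jumps, whereas the actual mechanism is that any extra component forces a strict flux defect, and hence strict monotonicity, already on time intervals where the flow is smooth and has not yet jumped. Without $f\equiv0$ on the minimal components and the Hopf lemma, the equality case is not established.

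Conversely, the jump inequality you flag as the main obstacle is not actually one: across a jump the newly exposed portions of the leaf are minimal ($H=0$) and the discarded portions carried $fH\geq0$, so $\int_{\Sigma_t}fH\,d\mu_t$ cannot increase; this uses only $f>0$ and $H\geq0$ on the weak flow leaves and imports verbatim from Wei (Proposition 4.5 of \cite{Wei2017}) and from Huisken--Ilmanen's Hawking-mass analysis, exactly as the paper does.
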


\begin{remark}
	As the manifolds we consider here have an interior boundary, the positive mass theorem does not apply. However, we do not require an assumption on the sign of $m$. For example, Theorem \ref{thm-main} holds when $(M,g)$ is taken to be an exterior region in a Schwarzschild manifold with negative mass.
\end{remark}

\begin{remark}
	In three dimensions, Huisken and Ilmanen's proof of the Riemannian Penrose inequality \cite{HI01} shows that the Hawking mass of an outer-minimizing surface bounds the ADM mass from below. This inequality can be expressed at
	\be 
	\frac{1}{16\pi}\( \frac{|\Sigma|}{4\pi}\)^{1/2}\int_{\Sigma} H^2\,dS\geq \( \frac{|\Sigma|}{4\pi}\)^{1/2}-2m.
	\ee 
	In higher dimensions, a recent result of Miao and the author \cite{MM16} gives a related inequality.	It would be interesting to compare these inequalities to \eqref{eq-main}.
\end{remark}

\section*{Acknowledgements}

{\sl The author would like to thank Yong Wei for many useful discussions in the preparation of this note, as well as to thank the Knut and Alice Wallenberg Foundation for financial support.}
\section{Properties of static potentials}
In the statement of Theorem \ref{thm-main}, we consider only static potentials that are positive and bounded. In this section we recall some basic properties of static potentials, and in particular, illustrate that this condition is natural and indeed such static manifolds are of particular interest to consider. Readers who are familiar with static metrics and Bartnik's quasi-local mass can skip this section.

Most of the literature pertaining to static manifolds considers the case $n=3$, motivated by general relativity. In particular, the study of static manifolds is very closely related to Bartnik's quasi-local mass. A common formulation of the Bartnik mass is the following, where usually one assumes $n=3$:
\begin{definition}
	Let the triple $(\Sigma,g,H)$ be a Riemannian metric $g$ on a closed $(n-1)$-manifold $\Sigma$, and $H$ be a positive function on $\Sigma$. Let $\mathcal{PM}(\Sigma,g,H)$ be the set of asymptotically flat manifolds with non-negative scalar curvature, boundary isometric to $(\Sigma,g)$ with (inward) mean curvature $H$, containing no closed minimal surfaces.\\
	
	The Bartnik mass is then given by
	\be \label{eq-bartnikmass}
	\m_B(\Sigma,g,H):=\inf_{(M,h)\in\mathcal{PM}(\Sigma,g,H)}\{ \m_{ADM}(M,h) \}.
	\ee
\end{definition}
While this mass seems effectively impossible to directly compute, Bartnik made the following conjecture\footnote{In dimension $n=3$.} regarding static metric extensions:
\begin{conjecture}
	Given $(\Sigma,g,H)$ as above, there is a unique static manifold in $\mathcal{PM}(\Sigma,g,H)$ and this manifold realises the infimum in (\ref{eq-bartnikmass}).
\end{conjecture}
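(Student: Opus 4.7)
The plan is to attack the conjecture in three stages: (i) produce an ADM mass minimizer over $\mathcal{PM}(\Sigma,g,H)$ via a compactness argument on minimizing sequences; (ii) derive the static equation (\ref{eq-static}) as the first-order condition satisfied by such a minimizer; and (iii) prove uniqueness of static extensions of the prescribed boundary data. Together, (i)--(ii) yield existence of a static mass-minimizer, and (iii) completes the conjecture.

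For (i), I would take a sequence $(M_i,g_i)\in\mathcal{PM}(\Sigma,g,H)$ with ADM masses decreasing to $\m_B(\Sigma,g,H)$. Theorem \ref{thm-main} (or, in dimension three, Huisken--Ilmanen's Riemannian Penrose inequality) gives a finite lower bound on the infimum, allowing for the negative values anticipated in the remark after Theorem \ref{thm-main}. The task is then to extract a subsequential limit $(M^*,g^*)\in\mathcal{PM}(\Sigma,g,H)$ in a topology strong enough to preserve boundary geometry and asymptotic flatness, yet weak enough to admit compactness from uniform mass, scalar curvature, and boundary bounds. Natural candidates are local $C^{k,\alpha}$ convergence after harmonic-gauge fixing near infinity, the intrinsic flat topology of Lee--Sormani, or Anderson--Jauregui-type compactness for boundary-controlled asymptotically flat manifolds. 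Lower semicontinuity of ADM mass under the chosen convergence would then identify the limit as a minimizer.

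For (ii), I would apply the stationary-mass technique of Bartnik at the minimizer $(M^*,g^*)$. First, Corvino's local deformation theorem rules out $R(g^*)>0$ on any open set, since otherwise one could construct compactly supported perturbations strictly decreasing the ADM mass while preserving $R\geq 0$ and the boundary data. Restricting thereafter to variations $h$ satisfying $L_{g^*}(h)=0$ (first-order preservation of $R=0$), stationarity of the ADM functional yields a Lagrange multiplier $f^*$ satisfying the formal adjoint equation $L^*_{g^*}(f^*)=0$, which unwinds to precisely (\ref{eq-static}). The normalisation $f^*\to 1$ at infinity is fixed by matching the leading-order variation of mass, while boundedness and positivity of $f^*$ should follow from maximum principles that exploit the mean-convex boundary component $\Sigma$ together with the fact that any remaining boundary components are minimal.

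For (iii), I would argue that the static vacuum system for $(g,f)$ on an AF manifold with prescribed boundary data $(\Sigma,g|_\Sigma,H)$ and the asymptotic condition $f\to 1$ is a sufficiently overdetermined elliptic problem to admit at most one solution, following unique continuation and rigidity strategies of Anderson--Khuri. The main obstacle will by far be step (i): no compactness theorem of the required strength is presently known in this generality, and minimizing sequences may pinch, develop detaching interior minimal surfaces, or lose regularity at $\Sigma$. Even granting existence of the minimizer, the Lagrange multiplier argument in (ii) demands enough regularity of $(M^*,g^*)$ and a sufficiently rich space of admissible deformations tangent to the constraint set, which in turn requires a careful analysis of the local structure of $\mathcal{PM}(\Sigma,g,H)$ in the space of AF metrics near $(M^*,g^*)$.
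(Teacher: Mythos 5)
The statement you have been asked about is not a theorem of the paper at all: it is Bartnik's static metric extension conjecture, presented explicitly as a \emph{Conjecture}, with no proof offered. The paper only records partial progress from the literature (Miao's perturbative existence of static extensions for data close to a round sphere in $\R^3$, and Anderson's existence result for $(\Sigma,g,\lambda H)$ with $\lambda$ small). Your proposal should therefore not be graded against a proof in the paper; it is, essentially, a restatement of Bartnik's own original program for attacking the conjecture, and it does not constitute a proof because each of its three steps is itself an open problem of comparable difficulty to the conjecture.

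Concretely: in step (i), no compactness theorem is known that keeps a minimizing sequence inside $\mathcal{PM}(\Sigma,g,H)$ --- the no-closed-minimal-surface condition, the exact induced boundary data $(g,H)$, and asymptotic flatness with lower semicontinuous ADM mass can all be lost in a limit, and whether the infimum in \eqref{eq-bartnikmass} is \emph{attained} is part of what the conjecture asserts, so it cannot be fed in as an input. In step (ii), the Lagrange multiplier argument producing a solution of \eqref{eq-static} requires the constraint set to have a Banach manifold structure near the minimizer with a sufficiently rich space of admissible variations respecting the boundary conditions; this is precisely where Bartnik's heuristic has resisted being made rigorous, and there is a circularity lurking in that the linearized constraint map fails to be surjective exactly when a static potential exists. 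In step (iii), uniqueness of static extensions with prescribed Bartnik boundary data is likewise open, and subsequent work (e.g.\ of Anderson and Jauregui) shows that for some natural formulations of the boundary conditions the conjecture as stated actually fails, so the formulation itself matters. You are candid that step (i) is the main obstacle, which is to your credit, but the honest conclusion is that this is a research plan for a well-known open problem, not a proof, and the paper quotes the statement only as motivation for studying static asymptotically flat manifolds.
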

\begin{remark}
	There are many subtly different formulations of Bartnik's quasi-local mass and the static metric extensions conjecture. For the sake of presentation, we avoid these technicalities here. In some cases, the conjecture has been resolved in the positive. For example, in the case of $(\Sigma,g,H)$ close to the data induced on a round sphere in $\R^3$, the existence of a static extension was proven by Miao \cite{Miao-CMP}. Recent work by Anderson \cite{Anderson} proves, given data $(\Sigma,g,H>0)$ then for sufficiently small $\lambda>0$, a static extension of $(\Sigma,g,\lambda H)$ can be found.
\end{remark}

It is worth reiterating, that while various static uniqueness theorems have been established, in the case of a manifold with mean convex boundary there exists a large class of examples of static manifolds, which are of particular interest in relation to Bartnik's quasi-local mass.

The following property of static potentials is very recent result of Huang, Martin and Miao \cite{HMM}, reformulated slightly such that it is directly applicable for us here.
\begin{theorem}[Theorem 1 of \cite{HMM}]\label{thm-fzero}
	Let $\Sigma$ be a closed minimal surface in an asyptotically flat manifold $(M,g)$ of dimension $3\leq n\leq7$, admitting a static potential $f$. Suppose there are no other closed minimal surfaces containing $\Sigma$, then $f\equiv0$ on $\Sigma$.\footnote{With the addition of a minor caveat, this result is in fact valid for all dimensions greater than $2$. However, as we are only concerned with $3\leq n\leq7$ here, we omit this caveat for the sake of clarity.}
\end{theorem}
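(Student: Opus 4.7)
The plan is to argue by contradiction, assuming $f\not\equiv 0$ on $\Sigma$. The proof splits into a local rigidity step on $\Sigma$ (using the static equation and the stability of $\Sigma$ as an outermost minimal surface) followed by a global step on $(M,g)$ itself. The dimensional restriction $3\leq n\leq 7$ is expected to enter only in the global step, through the Riemannian Penrose inequality (Huisken--Ilmanen in dimension three, Bray--Lee in dimensions four through seven).

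For the local step, Corvino's theorem gives $R_g\equiv 0$; tracing the static equation \eqref{eq-static} then yields $\Delta_g f=0$, so \eqref{eq-static} itself reduces to $\nabla^2 f=f\,\Ric_g$. Its $(\nu,\nu)$-component is $\nabla^2 f(\nu,\nu)=f\,\Ric_g(\nu,\nu)$, and the Gauss-type decomposition $\Delta_g f=\Delta_\Sigma f+\nabla^2 f(\nu,\nu)$ (valid since $H=0$) yields on $\Sigma$
\be
\Delta_\Sigma f + f\,\Ric_g(\nu,\nu)=0,
\ee
equivalently $Lf=f|A|^2$, where $L:=\Delta_\Sigma+|A|^2+\Ric_g(\nu,\nu)$ is the Jacobi operator of $\Sigma$. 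Replacing $f$ by $-f$ if necessary, the strong maximum principle and Hopf lemma applied to the harmonic function $f$ reduce us to the case $f>0$ on $\Sigma$. Since $\Sigma$ is outer-minimizing, it is stable, so $-L$ has first eigenvalue $\lambda_1\geq 0$ with positive eigenfunction $\phi_1$. Pairing $Lf=f|A|^2$ with $\phi_1$ and integrating by parts,
\be
-\lambda_1\int_\Sigma f\phi_1\,dS=\int_\Sigma f\phi_1|A|^2\,dS;
\ee
the left side is $\leq 0$, the right $\geq 0$. Both vanish, so $\lambda_1=0$ and $|A|\equiv 0$. Thus $\Sigma$ is totally geodesic, and $f|_\Sigma$ is a positive element of $\ker L$.

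For the global step, use the totally geodesic property to double $(M,g)$ across $\Sigma$, producing a $C^{1,1}$ asymptotically flat, scalar-flat manifold $(\tilde M,\tilde g)$ with two isometric ends of equal ADM mass and with $\Sigma$ outermost in each end. One plausible route to the contradiction is to use the positive Jacobi field $f|_\Sigma$ to construct (via Lyapunov--Schmidt reduction around the implicit function theorem, since $L$ has nontrivial kernel) a local foliation of a neighborhood of $\Sigma$ in $M$ by closed minimal hypersurfaces; any such foliation produces closed minimal surfaces strictly enclosing $\Sigma$, contradicting the outermost hypothesis. Alternatively, one can combine the Riemannian Penrose inequality and its rigidity statement on the doubled manifold with a Bunting--Masood-ul-Alam-type conformal argument using $f$, forcing $\tilde g$ to be Schwarzschild --- on which the static potential with asymptotic value $1$ is unique and vanishes on the horizon, contradicting $f|_\Sigma\not\equiv 0$.

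The main obstacle is this global step. The positive Jacobi field alone does not automatically produce a minimal foliation, and extracting the required foliation --- or alternatively closing a Penrose/conformal rigidity argument on the doubled manifold --- requires genuinely using the full static equation on $M$ (not just its restriction to $\Sigma$) and careful handling of the regularity of the doubled data (in particular of the even reflection of $f$, which a priori is only Lipschitz across $\Sigma$ since $\partial_\nu f$ need not vanish there). This is where I expect the substantive technical content of \cite{HMM} to lie, and it is the point at which the dimensional restriction $3\leq n\leq 7$ becomes essential rather than cosmetic.
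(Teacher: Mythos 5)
First, a point of comparison: the paper does not prove this statement at all --- it is quoted verbatim as Theorem 1 of Huang--Martin--Miao \cite{HMM} and used as a black box --- so your attempt has to be judged against what is actually needed to establish the result rather than against an internal argument. Your local step is essentially right and is indeed where the argument must begin: tracing the static equation against $R=0$ gives $\Delta_g f=0$ and $\nabla^2 f=f\,\Ric_g$, and on a minimal $\Sigma$ this yields $\Delta_\Sigma f+f\,\Ric(\nu,\nu)=0$, i.e.\ $Lf=f|A|^2$ (the same identity as \eqref{eq-Sigma-Laplace} with $H=0$). But your reduction ``replacing $f$ by $-f$ if necessary, the strong maximum principle and Hopf lemma reduce us to the case $f>0$ on $\Sigma$'' is not justified: the theorem makes no sign assumption on $f$, and even when $f>0$ in the interior it may still vanish on part of $\Sigma$; the maximum principle gives no control of the nodal set $\{f=0\}\cap\Sigma$. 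Your eigenfunction pairing, which forces $\lambda_1=0$ and $|A|\equiv 0$, only goes through where $f$ has a definite sign, and dealing with the nodal set is part of the genuine content of \cite{HMM}. (Also, ``outermost $\Rightarrow$ stable'' is itself a statement requiring the area-minimization machinery, not a tautology.)

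The decisive problem is the one you flag yourself: the global step is missing, and a proof that openly defers its main difficulty is not a proof. Of your two sketches, the doubling/Penrose route is a dead end --- there is no reason the doubled manifold saturates the Penrose inequality, so its rigidity clause gives nothing, and this also means your claim that the restriction $3\le n\le 7$ enters through the Penrose inequality is misplaced; it enters through geometric measure theory regularity of area minimizers, which is what makes the outermost minimal surface smooth and outward area-minimizing (consistent with the footnote's remark that the result extends to all dimensions with only a ``minor caveat''). The foliation route is the right idea, but as you note a positive Jacobi field does not produce a minimal foliation: the Lyapunov--Schmidt reduction around the degenerate-stable $\Sigma$ yields a foliation by constant-mean-curvature leaves, and the substantive work is to use the full static equation off $\Sigma$ to show the outward leaves have non-positive mean curvature, hence non-increasing area, and then to minimize area outside $\Sigma$ to manufacture a closed minimal surface enclosing it --- contradicting the hypothesis. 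As it stands, your proposal establishes a correct and relevant local identity but does not prove the theorem.
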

The above theorem is used in the proof of our main inequality in order to allow for manifolds with boundary components outside of the one we flow from, provided they are minimal surfaces. We also make use of the asymptotics of an arbitrary static extension (See, for example, \cite{AM,M-T}). Specifically, if $f$ is a bounded positive static potential then it can be rescaled by a constant, so that at infinity $f$ has the expansion
\be f=1-\frac{m}{|x|^{n-2}}+o(|x|^{2-n}), \label{eq-expansion}\ee 
where $m$ is the ADM mass of $(M,g)$.

Throughout, we will assume a bounded static potential has always been rescaled appropriately so it has the form given by (\ref{eq-expansion}).

Given such a static potential, we can obtain the ADM mass from this asymptotic expansion via
\be \label{eq-m-integral}
\lim_{r\rightarrow\infty}\int_{S_r}\nabla f\cdot\nu d\mu_t=\lim_{r\rightarrow\infty}\int_{S_r}\frac{(n-2)m}{r^{n-1}} dS=(n-2)\omega_{n-1}m.
\ee

We end this section by quoting the following proposition, which is a result of Miao--Tam \cite{M-T} showing that the assumptions we impose on the static potential are satisfied for any static asymptotically Schwarzschildean 3-manifold.
\begin{prop}\label{prop-MiaoTam}
	Let $(M, g)$ be an asymptotically Schwarzschildean 3-manifold with nonzero mass and static potential $f$. Then outside a compact set, $f$ is bounded and does not change sign; ie. we can choose $f>0$.
\end{prop}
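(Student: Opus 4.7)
The plan is to extract from the static equation (\ref{eq-static}) not merely the harmonicity of $f$, but the full asymptotic rigidity which, on an asymptotically Schwarzschildean end with $m\neq 0$, forces $f$ (up to scale) to match the explicit Schwarzschild static potential $\sqrt{1-2m/r}$.

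First I would reduce (\ref{eq-static}) to the coupled system
\be
\Delta_g f = 0, \qquad \nabla_g^2 f = f\,\Ric_g
\ee
on $M$. Tracing (\ref{eq-static}) and invoking scalar-flatness (Corvino's result, as already noted in the excerpt) gives $\Delta_g f=0$, and substituting back into (\ref{eq-static}) yields the Hessian identity. The Hessian identity is much stronger than harmonicity, since on the asymptotically Schwarzschildean end $\Ric_g=O(m\,r^{-3})$, so $\nabla_g^2 f$ is pinned pointwise by $f$ times a decaying symmetric tensor.

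Next, on the end I would decompose $f$ into spherical harmonics, $f(r,\theta)=\sum_{k\geq 0}f_k(r)Y_k(\theta)$, and substitute into the coupled system, treating the faster-decaying part $g-g_{\mathrm{Sch}}$ as a perturbation. The result is a system of perturbed linear ODEs for $\{f_k\}$ whose unperturbed models are the spherical-harmonic modes of the exact Schwarzschild static equation. For each $k$ the model has a two-dimensional solution space: one bounded branch (tending to a constant multiple of $Y_k$ with subleading $O(r^{-1})$ correction) and one unbounded branch. A weighted Sobolev (Bartnik-type) analysis then matches the given $f$ mode-by-mode to the bounded branch, yielding an expansion $f = a + b/r + o(r^{-1})$ for constants $a,b$.

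Finally I would use $m\neq 0$ to force $a\neq 0$. Matching the Hessian identity $\nabla_g^2 f = f\,\Ric_g$ against the leading Schwarzschild Ricci tensor yields the relation $b=-am$; if $a=0$ then $b=0$, and iterating this asymptotic matching forces $f$ to vanish to all orders at infinity. Since (\ref{eq-static}) is overdetermined elliptic it enjoys unique continuation, so $f\equiv 0$ on $M$, contradicting the hypothesis that $f$ is a static potential. Hence $a\neq 0$, so $f$ is bounded outside a compact set and of constant sign there; replacing $f$ by $-f$ if necessary gives $f>0$. The main obstacle is the mode-by-mode rigidity in the second step: harmonicity alone permits harmonic polynomials of every degree, and it is only by exploiting the full tensorial content of the static equation together with the nonzero mass of the Schwarzschildean background that one can rule out growing modes and pin $f$ to the bounded branch.
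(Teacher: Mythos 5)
First, a point of comparison: the paper does not prove Proposition \ref{prop-MiaoTam} at all --- it is quoted from Miao--Tam \cite{M-T} (essentially their Proposition 3.1), so there is no in-paper argument to measure you against; I am judging your sketch against the known proof. Your overall shape is right: reduce \eqref{eq-static} to $\Delta_g f=0$ together with $\nabla^2_g f=f\,\Ric_g$, extract an asymptotic expansion, and use $m\neq 0$ to control the leading constant. The relation $b=-am$ in your third step is also correct and is indeed how the normalisation \eqref{eq-expansion} arises.

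However, there is a genuine gap at exactly the step you yourself flag as ``the main obstacle.'' In your second step you assert that a weighted Sobolev analysis ``matches the given $f$ mode-by-mode to the bounded branch.'' This assumes what is to be proved: a static potential is merely a smooth solution of \eqref{eq-static}, with no a priori decay, growth bound, or membership in any weighted space, and the unbounded branch genuinely occurs --- on flat $\R^3$ the linear functions $f=a_ix^i$ are static potentials, so no soft functional-analytic matching can place $f$ in the bounded branch. Excluding the degree-one growing mode is precisely where $m\neq 0$ must enter, yet your proposal only invokes $m\neq 0$ in the third step, to show that the limit of an \emph{already bounded} $f$ is nonzero; boundedness itself (half the statement, and the harder half) is never established. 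The missing ingredients, following \cite{M-T}, are: (i) an a priori growth estimate --- restricting $\nabla^2 f=f\,\Ric$ to geodesic rays gives $|u''|\leq C|u|\,s^{-2-\tau}$, whence $f$ grows at most linearly and $f=a_ix^i+b+o(1)$ near infinity; and (ii) a matching at order $r^{-2}$ of $\nabla^2(a_ix^i)$, which picks up Christoffel terms of size $m/r^2$, against $(a_ix^i)\Ric$, whose angular structures are incompatible unless $ma_i=0$. Two smaller defects: your description of the bounded branch as ``tending to a constant multiple of $Y_k$'' is wrong for $k\geq 1$ (that branch decays like $r^{-k-1}$; only $k=0$ tends to a constant), and the ``vanishing to all orders plus unique continuation at infinity'' argument for the case $a=0$ is a nontrivial Carleman-type input that should be justified or cited rather than asserted.
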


\section{Proof of the main theorem}
The proof of Theorem \ref{thm-main} follows the main proof of \cite{Wei2017} very closely, replacing the Schwarzschild potential with a general static potential. The inequality follows from the monotonicity of a quantity $Q(t)$ (defined below) under weak inverse mean curvature flow (IMCF). The classical (smooth) IMCF is a family of hypersurfaces $\Sigma_t$ given by $x:\Sigma\times[0,T)\to M$, that evolve with speed proportional to the reciprocal of the mean curvature:
\be 
\frac{\p x}{\p t}=\frac1H\nu,
\ee 
where $\nu$ is the unit normal pointing towards infinity. In general, the flow does not remain smooth for all time and one must work with a weak formulation of IMCF, which appropriately jumps past times where the flow fails to be smooth. As we do not require the technical aspects of weak IMCF in this note, we omit the details and refer the reader to \cite{HI01} for an excellent exposition.

For a weak solution to IMCF, $\Sigma_t$, and a given bounded, positive static potential $f$, we define on each $\Sigma_t$ the quantity
\be \label{eq-Qdef}
Q(t):=|\Sigma_t|^{-\frac{n-2}{n-1}} \(2(n-1)\omega_{n-1}m+\int_{\Sigma_t} fH\,d\mu_t \).
\ee 
We show that this is monotone along the weak IMCF. This monotonicity has been used previously (eg. \cite{BHW,Wei2017}) to prove Minkowski inequalities in Schwarzschild and Schwarzschild-AdS manifolds.\\

We consider the flow in a manifold $M$ with possibly disconnected boundary, commencing the flow from a chosen boundary component, $\Sigma$. The idea is, that commencing the flow from $\Sigma$, the weak flow `jumps' over the other components of the boundary and continues to flow afterwards. Similar to the proof of the Riemannian Penrose inequality using IMCF, we must assume the other boundary components are minimal surfaces in order to preserve the monotonicity across the jump. The analysis here closely follows that of Wei \cite{Wei2017} in the Schwarzschild case. We first consider the monotonicity of $Q(t)$ where the flow is smooth. The proof is essentially that of Brendle--Hung--Wang \cite{BHW} where the monotoncity is used to prove a Minkowski inequality in the AdS-Schwarzschild manifold; we include the details here as it is useful to illustrate the minor differences between the Schwarzschild case and a general static manifold.

\begin{prop} \label{prop-smoothmonotone}
	Let $\Sigma_t$ be a smooth solution to IMCF for $0<t_1<t_2<T$ on an asymptotically flat manifold $M$ with static potential $f$, satisfying the hypotheses of Theorem \ref{thm-main}. Then 
	\be 
	Q(t_2)\leq Q(t_1)
	\ee 
	with equality if and only if for each $t\in[t_1,t_2]$, $\Sigma_t$ is totally umbilic and $M$ has no boundary components outside of $\Sigma_{t_1}$.
\end{prop}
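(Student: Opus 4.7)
My plan is to differentiate $Q(t)$ directly, simplify using the standard evolution equations of smooth IMCF together with the static equation (\ref{eq-static}), and then estimate the resulting boundary term via the divergence theorem, using (\ref{eq-m-integral}) and the behaviour of $f$ on the minimal components.

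First I would record the three relevant evolution equations: $\partial_t d\mu_t = d\mu_t$, the material derivative $\partial_t f|_{\Sigma_t} = \nabla_\nu f/H$, and $\partial_t H = -\Delta_{\Sigma_t}(H^{-1}) - (|A|^2+\Ric(\nu,\nu))/H$. These combine to give
\begin{equation*}
\frac{d}{dt}\int_{\Sigma_t} fH\,d\mu_t = \int_{\Sigma_t}\left(\nabla_\nu f - f\Delta_{\Sigma_t}(H^{-1}) - \frac{f|A|^2}{H} - \frac{f\Ric(\nu,\nu)}{H} + fH\right)d\mu_t.
\end{equation*}
The key substitution (replacing the explicit Schwarzschild computation in \cite{Wei2017}) comes from pairing (\ref{eq-static}) against $\nu\otimes\nu$, together with the standard hypersurface identity $\Delta_g f = \Delta_{\Sigma_t} f + \nabla^2 f(\nu,\nu) + H\nabla_\nu f$ and $\Delta_g f = 0$; this yields
\begin{equation*}
f\Ric(\nu,\nu) = -\Delta_{\Sigma_t} f - H\nabla_\nu f.
\end{equation*}
Inserting this, integrating by parts $\int f\Delta_{\Sigma_t}(H^{-1})\,d\mu_t = \int H^{-1}\Delta_{\Sigma_t} f\,d\mu_t$ (no boundary terms, since $\Sigma_t$ is closed), and cancelling the $\Delta_{\Sigma_t} f/H$ contributions leaves
\begin{equation*}
\frac{d}{dt}\int_{\Sigma_t} fH\,d\mu_t = \int_{\Sigma_t}\left(2\nabla_\nu f - \frac{f|A|^2}{H} + fH\right)d\mu_t.
\end{equation*}
Applying Cauchy--Schwarz $|A|^2\geq H^2/(n-1)$ and the elementary identity $\frac{d}{dt}|\Sigma_t|^{-(n-2)/(n-1)} = -\frac{n-2}{n-1}|\Sigma_t|^{-(n-2)/(n-1)}$, the $fH$ contributions cancel exactly, producing
\begin{equation*}
Q'(t) \leq 2|\Sigma_t|^{-(n-2)/(n-1)}\left(\int_{\Sigma_t}\nabla_\nu f\,d\mu_t - (n-2)\omega_{n-1}m\right).
\end{equation*}

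To close the argument I would apply the divergence theorem to $f$ on $\Omega_t\cap B_r$, where $\Omega_t$ is the exterior region of $\Sigma_t$ in $M$ with any minimal components of $\partial M$ as additional interior boundary. Using $\Delta_g f = 0$, letting $r\to\infty$, and invoking (\ref{eq-m-integral}) gives
\begin{equation*}
\int_{\Sigma_t}\nabla_\nu f\,d\mu_t = (n-2)\omega_{n-1}m + \sum_{\mathrm{min}}\int \nabla_{\nu_\Omega} f,
\end{equation*}
where $\nu_\Omega$ is the outward normal of $\Omega_t$ on each minimal component. By Theorem \ref{thm-fzero}, $f\equiv 0$ on these components, and since $f>0$ in $\Omega_t$, the Hopf boundary-point lemma applied to the harmonic function $-f$ forces $\nabla_{\nu_\Omega} f<0$ pointwise. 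Hence the sum is nonpositive and $Q'(t)\leq 0$. Equality in $Q(t_2)\leq Q(t_1)$ requires equality throughout $[t_1,t_2]$ in both Cauchy--Schwarz (so every $\Sigma_t$ is totally umbilic) and in the Hopf step (so no minimal components of $\partial M$ lie in $\Omega_{t_1}$), matching the stated characterisation.

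I expect the main obstacle to be the algebraic balance in the second step: the static equation and the ambient-to-intrinsic Laplacian decomposition must align precisely so that the $\Delta_{\Sigma_t} f/H$ contributions cancel after integration by parts, and it is this cancellation that allows a general static potential to be used in place of the explicit Schwarzschild potential of \cite{Wei2017}. Everything else is direct bookkeeping with the IMCF evolution equations and the divergence theorem, with Theorem \ref{thm-fzero} providing the only essential additional input needed to handle minimal boundary components.
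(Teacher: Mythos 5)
Your proposal is correct and follows essentially the same route as the paper: the same IMCF evolution equations, the same use of the contracted static equation together with the ambient-to-intrinsic Laplacian identity to obtain $\Delta_{\Sigma_t}f+f\Ric(\nu,\nu)=-H\,\nu\cdot\nabla f$, the same Cauchy--Schwarz step $(n-1)|A|^2\geq H^2$, and the same divergence-theorem argument with Theorem \ref{thm-fzero} and the Hopf lemma to control the flux through the minimal boundary components. The only difference is cosmetic (you differentiate $Q$ directly rather than first bounding $\frac{d}{dt}\int_{\Sigma_t}fH\,d\mu_t$ and then assembling), so no further comparison is needed.
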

\begin{proof}
	The evolution equations for the mean curvature and the volume form of $\Sigma_t$ under IMCF are well-known (see, for example, \cite{HI01}) to be given by
	\be 
	\frac{\partial}{\partial t}H=-\Delta_{\Sigma_t}\frac{1}{H}-\frac{1}{H}\( |A|^2+Ric(\nu,\nu) \),
	\ee 
	and
	\be 
	\frac{\partial}{\partial t}d\mu_t=d\mu_t.
	\ee
	It is therefore straightforward to compute
	\begin{align*}
	\frac{\partial}{\partial t}\int_{\Sigma}fHd\mu_t&=\int_\Sigma\(\frac{\partial f}{\partial t}H+f\frac{\partial H}{\partial t}+fH\)d\mu_t\\
	&=\int_{\Sigma}\( \frac{1}{H}\nu\cdot\nabla (f)H-f\Delta_{\Sigma_t}\frac1H-\frac{f}{H}\(|A|^2+\Ric(\nu,\nu)\)+fH\)d\mu_t\\
	&\leq\int_\Sigma\( \nabla f\cdot \nu-\frac1H\( \Delta_{\Sigma_t}f+f\Ric(\nu,\nu) \)+\frac{n-2}{n-1}fH\)d\mu_t.
	\end{align*}
	where we have used the inequality $(n-1)|A|^2\geq H^2$,
	
	It follows that we have
	\be 
	\frac{\partial}{\partial t}\int_{\Sigma}fHd\mu_t\leq \int_\Sigma\( \nabla f\cdot \nu-\frac1H\( \Delta_{\Sigma_t}f+f\Ric(\nu,\nu) \)+\frac{n-2}{n-1}fH\)d\mu_t .
	\ee 
	From the condition that $f$ is a static potential \eqref{eq-static}, and the identity
	$$ \Delta_{\Sigma_t}f= \Delta f-\nabla^2 f(\nu,\nu)-H\nu\cdot\nabla f,$$
	we obtain
	\be \label{eq-Sigma-Laplace}
	\Delta_{\Sigma_t}f+f\Ric(\nu,\nu)=-H\nu\cdot \nabla f.
	\ee

	Then substituting (\ref{eq-Sigma-Laplace}) into the evolution eq gives
	\be \label{eq-evo1}
	\frac{\partial}{\partial t}\int_{\Sigma}fHd\mu_t\leq \int_\Sigma\( \frac{n-2}{n-1} fH+2\nabla f\cdot\nu \)d\mu_t,
	\ee 
	with equality if and only if $(n-1)|A|^2=H^2$; that is, $\Sigma_t$ is umbilic. Now, making use of the fact that $f$ is harmonic, the divergence theorem allows us to write the integral $\int_\Sigma \nabla f\cdot\nu d\mu_t$ as a surface integral at infinity, and a surface integral on (the possibly empty or disconnected) remaining boundary $\widehat{\partial M}:=\partial M\setminus \Sigma$.
	
	By (\ref{eq-m-integral}), we then have
	\begin{align}\nonumber
	\frac{\partial}{\partial t}&\(2(n-1)m\omega_{n-1}+\int_{\Sigma}fHd\mu_t\)\\
	&\leq \frac{n-2}{n-1}\(\int_\Sigma  fH d\mu_t+2(n-1)m\omega_{n-1}\)-2\int_{\widehat{\partial M}}\nabla f\cdot\nu\,d\mu\label{eq-evo2}\\
	&\leq \frac{n-2}{n-1}\(\int_\Sigma  fH d\mu_t+2(n-1)m\omega_{n-1}\)\,d\mu,\nonumber
	\end{align} 
	where the last inequality follows by the Hopf lemma and Theorem \ref{thm-fzero}, and is in fact strict unless $\widehat{\partial M}$ is empty. This then implies that $Q(t)$ is strictly monotonically decreasing, unless each $\Sigma_t$ is umbilic and $\widehat{\partial M}$ is empty, in which case $Q(t)$ remains constant.
\end{proof}

We now turn to discuss the weak flow. A solution of weak IMCF is generally defined by the level sets of a function $u\geq 0$ on $M$. For each $t>0$, $\Sigma_t:=\partial\{u< t\}$ defines an expanding family of $C^{1,\alpha}$ hypersurfaces that minimise area among homologous hypersurfaces in the region $\{u\geq t\}$. As we do not require technical details of the flow in this short note, we omit further discussion and refer the interested reader to \cite{HI01}.

It is straightforward to verify that the analysis in Section 4 of \cite{Wei2017} depends only on the fact that $M$ admits a positive, bounded static potential $f$. In particular, if there are no boundary components in the region between $\Sigma_{t_1}$ and $\Sigma_{t_2}$, we have (cf. equation (4.12) of \cite{Wei2017})
\begin{equation}\label{s2:eq2}
\int_{\Sigma_{t_2}}fHd\mu_{t_2}-\int_{\Sigma_{t_1}}fHd\mu_{t_1}~\leq~\int_{t_1}^{t_2}\int_{\Sigma_s}\left(2\nabla f\cdot\nu+\frac{n-2}{n-1}fH\right)d\mu_sds.
\end{equation}
Now, each $\Sigma_s$ can be approximated in $C^{1,\alpha}$, so the first integral on the right hand side can be estimated by
\begin{equation*}
\int_{\Sigma_s}\nabla f\cdot\nu\leq (n-2)\omega_{n-1}m
\end{equation*}
with equality if and only if $M$ has no boundary components outside of $\Sigma_s$. Hence
\begin{equation}\label{s2:eq3}
\int_{\Sigma_{t_2}}fHd\mu_{t_2}-\int_{\Sigma_{t_1}}fHd\mu_{t_1} \leq \frac{n-2}{n-1}\int_{t_1}^{t_2}\(\int_{\Sigma_s}\left(fH\right)d\mu_s+2(n-1)\omega_{n-1}m\)ds.
\end{equation}
As $\Sigma$ is outer-minimizing, under IMCF we have that $|\Sigma_t|=e^t|\Sigma|$, and it therefore follows from Gronwall's Lemma -- as in Section 4 of \cite{Wei2017} -- that $Q(t)$ is strictly monotonically decreasing, unless each $\Sigma_t$ is umbilic and $\widehat{\partial M}$ has no boundary components outside $\Sigma_t$. In the case where the flow gets close to another boundary component of $M$, the fact that the other boundary components are minimal surfaces ensure that the analysis is unchanged at the jump. That is, when the flow jumps across another boundary component it behaves identically as to when it jumps across the horizon in the Schwarzschild case, and therefore $Q(t)$ remains monotone. That is, we have monotonicity of $Q(t)$ along the weak flow (cf. Proposition 4.5 of \cite{Wei2017}).

It remains to be shown that $Q(t)$ has the correct limiting behaviour at as the flow runs out to infinity. However, this too follows from the analysis in \cite{Wei2017}. One easily checks that the proof of Proposition 5.1 in \cite{Wei2017} is valid for a general static manifold (cf. equation (5.6) therein) yielding 

\be 
\lim_{t\rightarrow\infty}Q(t)=(n-1)\omega_{n-1}^{\frac{1}{n-1}}.
\ee

\begin{proof}[Proof of Theorem \ref{thm-main}]
	
	Since the weak IMCF runs from the outer-minimizing surface $\Sigma$ out to infinity, the main theorem is then an immediate consequence of the monotonicity combined with the limiting behaviour of $Q(t)$. That is, we have
	
	\be 
	|\Sigma|^{-\frac{n-2}{n-1}}\( 2(n-1)\omega_{n-1}m+\int_\Sigma fH dS \)\geq (n-1)\omega_{n-1}^{\frac{1}{n-1}},
	\ee 
	which can then be expressed as
	\be 
	\frac{1}{(n-1)\omega_{n-1}}\int_\Sigma fH dS\geq \( \frac{|\Sigma|}{\omega_{n-1}} \)^{\frac{n-2}{n-1}} -2m.
	\ee 
\end{proof}


\begin{thebibliography}{10}
	\bibitem{ADM} Arnowitt, R., Deser,S., and Misner, C., {\sl Coordinate invariance and energy expressions
		in general relativity}, Phys. Rev. \textbf{122} (1961) 997--1006.
	
	\bibitem{AM} Agostiniani, V., and Mazzieri, L.,
	{\sl On the Geometry of the Level Sets of Bounded Static Potentials.}, Comm. Math. Phys.,  \textbf{355} (2017), no. 1, pp 261–301.
	
	\bibitem{Anderson} Anderson, M.,
	{\sl On the Bartnik conjecture for the static vacuum Einstein equations.}, Class. Quantum, Grav.,  \textbf{33} (2016), no. 1, 015001.
	
		
	\bibitem{Bartnik-86} Bartnik, R., {\sl The mass of an asymptotically flat manifold}, Comm. Pure Appl. Math., \textbf{39} (1986), no. 5, 661--693.
	
	
	\bibitem{BHW} Brendle, S., Hung, P. K., Wang, M. T., 
	{\sl A Minkowski Inequality for Hypersurfaces in the Anti-de Sitter-Schwarzschild Manifold.}, Comm. Pure Appl. Math., \textbf{69}, 1, (2016), 124--144
	
	
	\bibitem{Chrusciel}
	Chru\'sciel,  P.,  {\sl Boundary conditions at spatial   infinity from a Hamiltonian point of view},
	Topological Properties and Global Structure of Space-Time, Plenum Press, New York, (1986), 49--59.
	
	\bibitem{Corvino} Corvino, J.,
	{\sl Scalar curvature deformation and a gluing construction for the
		Einstein constraint equations.}, Comm. Math. Phys., \textbf{214}, 1, (2000), 137--189
	
	\bibitem{LimaG} de lima, L.L. and Gir\~{a}o, F., {\it An Alexandrov-Fenchel-type inequality in hyperbolic space with an application to a penrose inequality}, Ann. Henri Poincar\'{e}, \textbf{17} (2016), no. 4,  979--1002.
	
	\bibitem{GMTZ} Guan P., Ma, X. N., Trudinger, N., and Zhu, X., 
	{\sl A form of Alexandrov-Fenchel inequality.}, Pure Appl. Math. Q. \textbf{6} (2010), 999--1012.
	
	\bibitem{G-L09} Guan P., and Li, J., 
	{\sl The quermassintegral inequalities for k-convex starshaped domain.}, Adv. Math. \textbf{221} (2009), 1725-1732.
	
	\bibitem{HMM} Huang, L-H., Martin, D., and Miao,  P., 
	{\sl Static potentials and area minimizing hypersurfaces}, Proc. Amer. Math. Soc. (To appear) (2018).
	
	\bibitem{H09} Huisken, G., Inverse mean curvature flow and isoperimetric inequalities, video available at
	\url{https://video.ias.edu/node/233} (2009)
	
	\bibitem{HI01} Huisken, G. and Ilmanen,I.,  {\it The inverse mean curvature flow and the Riemannian Penrose inequality,} J. Differential Geom. \textbf{59} (2001), 353--438.
	
	\bibitem{MM16} McCormick, S.; Miao, P.,  
	{\sl On a Penrose-like inequality in dimensions less than eight}, Int. Math. Res. Not., rnx181 (2017).
	
	\bibitem{Miao-CMP} Miao,  P., 
	{\sl On Existence of Static Metric Extensions in General Relativity}, Comm. Math. Phys., \textbf{241}, 1 (2003),  pp 27--46.
	
		
	\bibitem{M-T} Miao,  P., and Tam, L-F.,
	{\sl Static Potentials on Asymptotically Flat Manifolds}, Ann. Henri Poincar{\'e}, \textbf{16}, 10 (2015),  2239--2264.
	
	\bibitem{M-T2} Miao,  P., and Tam, L-F.,
	{\sl Evaluation of the ADM mass and center of mass via the Ricci tensor}, Proc. Amer. Math. Soc., \textbf{144}, (2015),  753--761.
	
	\bibitem{Minkowski} Minkowski, H.,
	{\sl Volumen und Oberfl\"ache}, Math. Ann., \textbf{57} (1903), 447--495.
	
	
	\bibitem{Wei2017} Wei. Y,
	{\sl On the Minkowski-type inequality for outward minimizing hypersurfaces in Schwarzschild space.}, arXiv preprint arXiv:1701.04964 (2017).
	
\end{thebibliography}
\end{document}